\numberwithin{equation}{section}
\newcommand{\ind}[1]{\mathbf{1}_{#1}}
\newtheorem{theo}{Theorem}[section]
\newtheorem{pr}{Proposition}[section]
\newtheorem{lem}{Lemma}[section]
\newcommand{\Z}{\mathbb{Z}}
\newcommand{\N}{\mathbb{N}}
\newcommand{\R}{\mathbb{R}}
\begin{document}

\title{A Quenched Functional Central Limit Theorem for Planar Random Walks in Random Sceneries}
\author{Nadine Guillotin-Plantard \footnotemark[1] \ , Julien Poisat \footnotemark[2]
 \\ and Renato Soares dos Santos \footnotemark[1] }

\footnotetext[1]{Institut Camille Jordan, CNRS UMR 5208, Universit\'e de Lyon, Universit\'e Lyon 1, 43, Boulevard du 11 novembre 1918, 69622 Villeurbanne, France.\\ E-mail: nadine.guillotin@univ-lyon1.fr ; soares@math.univ-lyon1.fr;}
\footnotetext[2]{Mathematical Institute, Leiden University, P.O. Box 9512, 2300 RA Leiden, The Netherlands.
E-mail: poisatj@math.leidenuniv.nl; \\
{\it Key words:} Random walk in random scenery; Limit theorem; Local time; Associated Random Variables. \\
{\it AMS Subject Classification:} 60F05, 60G52.\\
This work was supported by the french ANR project MEMEMO2 10--BLAN--0125--03 and ERC Advanced Grant 267356 VARIS.\\
}

\maketitle

\begin{abstract}
Random walks in random sceneries (RWRS) are simple examples of stochastic processes in disordered media. They were introduced at the end of the 70's by Kesten-Spitzer and Borodin, motivated by the construction of new self-similar processes with stationary increments. Two sources of randomness enter in their definition: a random field $\xi = (\xi(x))_{x \in \Z^d}$ of i.i.d.\ random variables, which is called the \emph{random scenery}, and a random walk $S = (S_n)_{n \in \N}$ evolving in $\Z^d$, independent of the scenery. The RWRS $Z = (Z_n)_{n \in \N}$ is then defined as the accumulated scenery along the trajectory of the random walk, i.e., $Z_n := \sum_{k=1}^n \xi(S_k)$. The law of $Z$ under the joint law of $\xi$ and $S$ is called ``annealed'', and the conditional law given $\xi$ is called ``quenched''. Recently, functional central limit theorems under the quenched law were proved for $Z$ by the first two authors for a class of transient random walks including walks with finite variance in dimension $d \ge 3$. In this paper we extend their results to dimension $d=2$.

\end{abstract}

\section{Introduction}
Let $d\geq1$ and $(\xi(x))_{x\in \Z^d}$ be a collection of independent and identically distributed (i.i.d.)\ real random variables,
further referred to as {\it scenery}, and $(S_n)_{n\geq0}$ a random walk evolving in $\Z^d$, independent of the scenery. The random walk in random scenery (RWRS) is the process obtained by adding up the values of the scenery seen by the random walk along its trajectory, that is, $Z_n = \xi(S_1) + \ldots + \xi(S_n)$, $n\geq1$. This model was introduced independently by Kesten and Spitzer \cite{KS79} and by Borodin \cite{Bor79-1,Bor79-2}.\\

RWRS appears naturally in a variety of contexts, for instance (i) in the energy function of statistical mechanics models of polymers interacting with a random medium, (ii) in Bouchaud's trap model via the clock process, see \cite{BAC07}, (iii) in the study of random walks in randomly oriented lattices, as in \cite{CP03, CGPPS11}. The last example is related to the phenomenon of anomalous diffusion in layered random media, see Le Doussal \cite{lD92} and Matheron and de Marsily \cite{MdM80} on this matter. Indeed, Kesten and Spitzer's original motivation was to build a new class of self-similar sochastic processes with non-standard normalizations.\\

Results were first established under the {\it annealed} measure, that is when one averages at the same time over the scenery and the random walk.
Let us suppose here that the random walk increment and the scenery at the origin are in the domains of attraction of different stable laws with index $a$ and $b$ $\in (0, 2]$, respectively. In the case $d = 1 < a$, Kesten and Spitzer \cite{KS79} proved that the process $(n^{-\delta}Z_{\lfloor nt \rfloor})_{t\geq0}$ converges weakly, as $n\to\infty$, to a continuous $\delta$-self-similar process, where $\delta =  1 - a^{-1} + (ab)^{-1}$. Later on, Bolthausen \cite{Bo89} proved a functional central limit for $(\sqrt{n\log n}^{\hspace*{1pt}-1} Z_{\lfloor nt \rfloor})_{t\geq0}$ in the case $d=a=b=2$, 
and his result also covers the case $d=a=1$, $b=2$. More recently, Castell, Guillotin-Plantard and P\`ene \cite{CGPP} proved that, for $d=a\in\{1,2\}$ and $0<b<2$, $(Z_{\lfloor nt \rfloor})_{t\geq0}$ has to be normalized by $n^{1/b}(\log n)^{1-1/b}$ so that it converges to a limiting process, which is stable of index $b$. The case of a transient random walk (i.e., $a < d$) has also been treated in \cite{CGPP} (see also \cite{S76,KS79,Bor79-2}): rescaling by $n^{1/b}$ one obtains as limit a stable process of index $b$. Other results on RWRS include strong approximation results and laws of the iterated logarithm \cite{CKS99, CR83, KL98}, limit theorems for correlated sceneries or walks \cite{CD09,GPP10}, large and moderate deviations results \cite{AC07, C04, CP01, FlMoWa08, GKS07}, ergodic and mixing properties \cite{dHS06}.\\

Distributional limit theorems for {\it quenched} sceneries (that is, conditionally given the scenery) are more recent. The first result in this direction that we are aware of was obtained by Ben Arous and \v{C}ern\'y \cite{BAC07}, in the case of a heavy-tailed scenery and planar random walk. Recently, the first two authors proved in \cite{GuPo12} that a quenched functional central limit theorem (with the usual $\sqrt{n}$-scaling and Gaussian law in the limit) holds for a class of transient random walks. Moreover, with one of the methods used there, namely convergence of moments, they could prove convergence along a subsequence for sceneries having finite moments of all orders and planar random walks with finite non-singular covariance matrices, after a non-standard scaling by $\sqrt{n\log n}$. The question was raised whether the convergence takes place along the full sequence. In this paper we are able to answer this question in the positive when the scenery has slightly more than a second moment.\\

\section{Notation, assumptions and results}
\label{sec:not}

Let us start with a few words about notation. 
We will denote by $\N := \{0,1,2,\ldots\}$
the set of non-negative integers and put $\N^* := \N \setminus \{0\}$.
We will write $C$ to denote a generic positive constant
that may change from expression to expression.

We now proceed to define the model.
Let $S = (S_n)_{n \ge 0}$ be a random walk in $\Z^2$ starting at $0$,
i.e., $S_0 = 0$ and
\begin{equation}
\left(S_n-S_{n-1}\right)_{n \ge 1} \text{ is a sequence of i.i.d.\ } \Z^2 \text{-valued random variables}.
\end{equation}
We denote the local times of the random walk by
\begin{equation}\label{defN}
N_n(x) := \sum_{1\leq k \leq n} \ind{\{S_k = x\}},\quad x\in\mathbb{Z}^2.
\end{equation}

Let $\xi = (\xi(x))_{x \in \Z^2}$ be a field of i.i.d.\ real random variables independent of $S$.
The field $\xi$ is called the \emph{random scenery}.

The random walk in random scenery (RWRS) $Z = (Z_n)_{n \ge 0}$ is defined 
by setting $Z_0 := 0$ and, for $n \in \N^{*}$,
\begin{equation}\label{defZ}
Z_n := \sum_{i=1}^n \xi(S_i) = \sum_{x \in \Z^2} \xi(x) N_n(x).
\end{equation}

We will denote by $\mathbb{P}$ the joint law of $S$ and $\xi$, and by $P$ the marginal of $S$.
The law $\mathbb{P}$ is called the \emph{annealed} law, while the conditional law $\mathbb{P}(\cdot | \xi)$ is
called the \emph{quenched} law.

We will make the following two assumptions on the random walk and on the random scenery:

\paragraph{(A1)}
The random walk increment $S_1$ has a centered law with a finite and non-singular covariance matrix $\Sigma$.
We further suppose that the random walk is aperiodic in the sense of Spitzer \cite{S76},
which means that $S$ is not confined to a proper subgroup of $\mathbb{Z}^2$.

\paragraph{(A2)} $\mathbb{E}[\xi(0)]=0$, $\mathbb{E}[|\xi(0)|^2]=1$ and there exists a ${ \chi > 0}$ such that
\begin{equation}\label{eq:A2}
\mathbb{E}\left[ |\xi(0)|^2 (\log^+ |\xi(0)|)^{ \chi}\right] < \infty,
\end{equation}
where $\log^+ x := \max(0,\log x)$.

\vspace{0.2cm}
The aim of this paper is to prove the following quenched functional central limit theorem. 
\begin{theo}\label{theoCLT}
Under assumptions (A1) and (A2), for $\mathbb{P}$-a.e.\ $\xi$, the process
\begin{equation}
W^{(n)}= \left( W_t^{(n)} \right)_{t \ge 0} := \left( \frac{Z_{\lfloor nt \rfloor} }{ \sqrt{n \log n}} \right)_{t \ge 0}
\end{equation}
converges weakly as $n \to \infty$ under $\mathbb{P}(\cdot | \xi)$ in the Skorohod topology to a Brownian motion
with variance $\sigma^2 = (\pi \sqrt{\det \Sigma})^{-1}$.
\end{theo}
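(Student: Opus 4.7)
This theorem is a quenched refinement of Bolthausen's annealed CLT \cite{Bo89}, which already identifies both the Gaussian limit with variance $\sigma^{2}=(\pi\sqrt{\det\Sigma})^{-1}$ and the non-standard rate $\sqrt{n\log n}$. The standard functional-CLT machinery in $D([0,\infty))$ reduces the problem to (a) convergence of finite-dimensional distributions under $\mathbb{P}(\cdot\mid\xi)$ for $\mathbb{P}$-a.e.\ $\xi$, and (b) tightness of $(W^{(n)})_{n}$ under $\mathbb{P}(\cdot\mid\xi)$ for $\mathbb{P}$-a.e.\ $\xi$. The guiding idea is that quenched characteristic functions and quenched moments of $Z_{n}$ concentrate around their annealed counterparts, so the Gaussian limit of (a) and the increment bounds of (b) are inherited from the annealed world.

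\textbf{Step 1: finite-dimensional convergence via the characteristic function.} Fix $\theta\in\R$ and set $\Phi_{n}(\theta\mid\xi):=\mathbb{E}[\exp(i\theta Z_{n}/\sqrt{n\log n})\mid\xi]$. Since Bolthausen's theorem gives $\mathbb{E}_{\xi}[\Phi_{n}(\theta\mid\xi)]\to e^{-\theta^{2}\sigma^{2}/2}$, it is enough to show $\Phi_{n}(\theta\mid\xi)-\mathbb{E}_{\xi}[\Phi_{n}]\to 0$ a.s. Introducing an independent copy $S'$ of $S$ coupled to the \emph{same} scenery and letting $\phi_{\xi}$ denote the characteristic function of $\xi(0)$, the tower property yields
\[
\mathbb{E}_{\xi}\!\left[|\Phi_{n}(\theta\mid\xi)|^{2}\right] \;=\; \mathbb{E}_{S,S'}\!\Biggl[\,\prod_{x\in\Z^{2}}\phi_{\xi}\!\Bigl(\tfrac{\theta(N_{n}(x)-N'_{n}(x))}{\sqrt{n\log n}}\Bigr)\Biggr].
\]
Taylor expanding $\phi_{\xi}(u)=1-u^{2}/2+o(u^{2})$, with the remainder quantified through (A2), reduces the right-hand side to the asymptotics of the self-intersection local time $\sum_{x}N_{n}(x)^{2}\sim\sigma^{2}n\log n$ (in probability, by Bolthausen) and the mutual-intersection local time $\sum_{x}N_{n}(x)N'_{n}(x)$, which in $d=2$ is only of order $n=o(n\log n)$. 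These two facts force $\bvar_{\xi}(\Phi_{n}(\theta\mid\xi))\to 0$, so $\Phi_{n}\to e^{-\theta^{2}\sigma^{2}/2}$ in $L^{2}(\mathbb{P})$. To upgrade this to almost-sure convergence along the \emph{full} sequence --- the novelty over \cite{GuPo12}, where only a subsequential statement was proved --- one quantifies the rate using the $\log^{\chi}$ moment, applies Borel--Cantelli along a dyadic subsequence $n_{k}=\lfloor a^{k}\rfloor$, and interpolates using the $n$-continuity of $n\mapsto\Phi_{n}(\theta\mid\xi)$. Joint convergence at times $t_{1}<\cdots<t_{k}$ follows by running exactly the same computation on $\sum_{j}\theta_{j}(Z_{\lfloor nt_{j}\rfloor}-Z_{\lfloor nt_{j-1}\rfloor})$: cross-intersections of local times built on disjoint time-windows are again $O(n)$, yielding the diagonal covariance of independent Gaussian increments characteristic of Brownian motion.

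\textbf{Step 2: tightness -- the main obstacle.} The weak moment assumption (A2) rules out a direct fourth-moment Kolmogorov criterion on $\xi$, so a truncation must be built in: decompose $\xi(x)=\xi(x)\ind{|\xi(x)|\le b_{n}}+\xi(x)\ind{|\xi(x)|>b_{n}}$ with $b_{n}$ tuned so that (A2) makes the discarded mass summable along a dyadic scale in $n$. For the bounded part, one obtains the Bickel--Wichura-type estimate $\mathbb{E}[|W^{(n)}_{t}-W^{(n)}_{s}|^{4}\mid\xi]\le C(t-s)^{2}$ for $\mathbb{P}$-a.e.\ $\xi$, from quenched fourth-moment computations controlled by suitable high-moment bounds on the local-time increments $\sum_{x}(N_{\lfloor nt\rfloor}(x)-N_{\lfloor ns\rfloor}(x))^{p}$. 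The positive association of the local-time field $\{N_{n}(x)\}_{x\in\Z^{2}}$ --- each coordinate being a non-decreasing functional of the walk path --- should allow maximal inequalities in the spirit of Newman (which is presumably why ``Associated Random Variables'' appears in the keywords) to control the Skorohod modulus simultaneously in $t$. A final Borel--Cantelli step, again fuelled by the $\log^{\chi}$ moment, promotes tightness from dyadic $n$ to the full sequence. The delicate interplay between the truncation level $b_{n}$, the moment exponent $\chi$ in (A2), and the logarithmic corrections inherent to the $d=2$ local-time asymptotics is what I expect to be the main technical difficulty of the proof.
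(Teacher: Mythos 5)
Your overall architecture (fdd convergence plus tightness, with concentration of quenched quantities around annealed ones) is reasonable in spirit, but Step~1 contains a genuine gap at exactly the point where the real difficulty of this theorem lies. The second-moment computation you describe is correct as far as it goes: the variance of the quenched characteristic function is governed by the mutual intersection local time $J_n=\sum_x N_n(x)N'_n(x)$ of two independent copies of the walk, and in $d=2$ one has $E[J_n]\asymp n$, so that
\begin{equation*}
\bvar\left(\Phi_n(\theta\mid\xi)\right)\;\asymp\;\frac{E[J_n]}{n\log n}\;\asymp\;\frac{1}{\log n}.
\end{equation*}
But this rate is sharp and is \emph{not} summable along a geometric (``dyadic'') subsequence $n_k=\lfloor a^k\rfloor$, since $\sum_k 1/\log(a^k)=\sum_k (k\log a)^{-1}=\infty$. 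Chebyshev plus Borel--Cantelli therefore fails, and the extra $\log^{\chi}$ moment of (A2) does not improve this variance bound at all (it only controls the tail of the scenery, not the intersection local time). This is precisely the obstruction that limited \cite{GuPo12} to subsequential convergence; taking a much sparser subsequence to restore summability destroys the interpolation step, because $W^{(n)}$ and $W^{(n_k)}$ are no longer uniformly close over $n\in[n_k,n_{k+1}]$. So your Step~1, as written, proves convergence in probability of the quenched characteristic functions but cannot deliver the almost-sure statement along the full sequence.

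The paper overcomes this by replacing the second-moment bound with an exponential concentration inequality. It works with bounded Lipschitz functionals $F(\mathcal{W}^{(\lfloor b^n\rfloor)})$ of the whole polygonal path (so that the Bolthausen--Sznitman lemma handles both the fdd part and tightness at once, the latter being inherited from Bolthausen's annealed FCLT rather than re-proved via quenched fourth moments), truncates the scenery at level $M_n=\sqrt{n/(\log n)^{1+\chi/2}}$ (this is where $\chi>0$ enters), and decomposes $\mathbb{E}[F\mid\xi]-\mathbb{E}[F]$ as a martingale in the scenery variables site by site. The truncation gives an a.s.\ bound $|\Delta_k^{(n)}|\le C(\log n)^{-\chi/4}$ on the increments via $\sup_x E[N_n(x)]\le K\log n$, the predictable quadratic variation is bounded by $C/\log n$ via $\sum_x E[N_n(x)]^2\le Kn$ (the same intersection-local-time input as in your variance computation), and Bernstein's martingale inequality then yields a deviation probability $\exp\{-C(\log n)^{1\wedge\chi/4}\}$, which \emph{is} summable along $\lfloor b^n\rfloor$. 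If you want to salvage your characteristic-function route, you would need an analogous exponential (rather than $L^2$) concentration estimate for $\Phi_n(\theta\mid\xi)$; the variance bound alone cannot close the argument. Your Step~2 speculation about association of the local-time field is not used in the paper and is not needed once tightness is imported from the annealed theorem.
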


\noindent\textbf{Remark:}
The conclusion of this theorem still holds if, alternatively, the assumption (A1) is replaced 
by the following one:
\paragraph{(A1')}   The sequence $S = (S_n)_{n \ge 0}$ is an aperiodic random walk in $\Z$ starting from $0$ 
such that $\left(\frac{S_n}n\right)_n$ converges
in distribution to a random variable with characteristic function given by 
$t\mapsto \exp(-a |t|)$, $a>0$. In this case, $\sigma^2 = 2 (\pi a)^{-1}$.

\vspace{0.2cm}

Indeed, the proof of Theorem \ref{theoCLT} 
depends on the random walk $S$ only through certain local time properties
which are known to be the same under assumptions (A1) or (A1'). 
These properties are listed in Section 4.

\section{Outline of the proof of Theorem~\ref{theoCLT}}
\label{sec:outlineproof}

We will use a method introduced by Bolthausen and Sznitman in \cite{BoSz02}. 
The idea is to pass the functional CLT from the annealed to the quenched law
using concentration of quenched expectations of Lipschitz functionals
of the rescaled process. In our setting, the annealed version of
Theorem~\ref{theoCLT} was proved by Bolthausen in \cite{Bo89}, and his proof also works
under (A1').

To describe the method more precisely, 
let $\mathcal{W}^{(n)}$ be the polygonal interpolation of $W^{(n)}$, that is,
\begin{equation}\label{def:interpol}
\mathcal{W}_t^{(n)} := \frac{Z_{\lfloor n t \rfloor} + \left(nt - \lfloor nt\rfloor \right) \left(Z_{\lfloor n t \rfloor + 1}-Z_{\lfloor n t \rfloor} \right)}{\sqrt{n \log n}}.
\end{equation}
For $T>0$, consider the space $C([0,T], \R)$ of continuous functions from $[0,T]$ to $\R$ equipped with the sup norm.
We abuse notation by writing $\mathcal{W}^{(n)}$ to mean also the restriction of this process to the interval $[0,T]$, depending on context.

Following the reasoning in Lemma 4.1 of \cite{BoSz02}, 
we see that Theorem~\ref{theoCLT} will follow from the annealed functional CLT in \cite{Bo89}
if we show that, for any $T>0$, $b \in (1,2]$ and any bounded Lipschitz function
$F:C([0,T], \R) \to \mathbb{C}$,
\begin{equation}\label{concentrationLipschitz}
\lim_{n \to \infty} \mathbb{E}\left[ F\left(\mathcal{W}^{(\lfloor b^n\rfloor)}\right) \;\middle|\; \xi \; \right] - 
\mathbb{E}\left[ F\left(\mathcal{W}^{(\lfloor b^n\rfloor)} \right) \right] = 0 \;\;\; \mathbb{P} \text{-a.s.}
\end{equation}

To prove \eqref{concentrationLipschitz} we will use 
a martingale decomposition, in a similar fashion as in Bolthausen and Sznitman \cite{BoSz02} (proof of Theorem 4.2), 
Berger and Zeitouni \cite{BeZe08} (proof of Theorem 4.1) and Rassoul-Agha and Sepp\"al\"ainen \cite{RaSe09} (proof of Proposition 6.1).
In order to control the martingale via exponential inequalities, 
we introduce first as a technical step a truncation of the scenery, from
which the restriction $\chi > 0$ originates.

The rest of the paper is organized as follows. 
In Section~\ref{sec:RW2d} we collect two facts about 
two-dimensional random walks that we will need. 
Section~\ref{sec:prooftheoCLT} contains the proof of Theorem~\ref{theoCLT}, 
given in two steps: in Section~\ref{subsec:trunc} we define a truncation
of the RWRS and reduce the problem to showing \eqref{concentrationLipschitz} for the truncated version,
and this last step is carried out in Section~\ref{subsec:control_truncation}.

\section{Two-dimensional random walks}
\label{sec:RW2d}

We state here two lemmas about two-dimensional random walks satisfying (A1) that will be needed in the sequel. 
Analogous statements are valid under (A1').

\begin{lem}\label{lemma:2dRWs}
There exists a $K \in (0,\infty)$ such that
\begin{align}
(i) & \quad \sup_{x \in \Z^2} E \left[ N_n(x)\right] \le K \log n \;\;\; \forall \; n \ge 2. \label{avloctimes} \\
(ii) & \quad \sum_{x \in \Z^2} E \left[ N_n(x) \right]^2 \le K n \qquad \forall \; n \in \N^*. \label{eq:mutintloctimes}
\end{align}
\end{lem}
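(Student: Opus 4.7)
The plan is to derive both bounds from the local central limit theorem (LCLT) for two-dimensional aperiodic random walks with finite, non-degenerate covariance, which gives a uniform upper bound
\[
\sup_{x \in \Z^2} P(S_k = x) \le \frac{C}{k}, \qquad k \ge 1,
\]
under (A1) (and an analogous $1/k$ bound under (A1'), since a 1-dimensional strictly stable walk of index $1$ has transition densities decaying like $1/k$ at the origin). This is the only nontrivial input; the two bounds then reduce to elementary summation estimates.

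For part (i), I would just write
\[
E[N_n(x)] = \sum_{k=1}^n P(S_k = x) \le \sum_{k=1}^n \frac{C}{k} \le K \log n
\]
for all $n \ge 2$ and all $x$, where $K$ absorbs the LCLT constant.

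For part (ii), I would introduce an independent copy $S' = (S'_n)_{n \ge 0}$ of $S$, so that
\[
\sum_{x \in \Z^2} E[N_n(x)]^2
= \sum_{x \in \Z^2} \sum_{k,l=1}^n P(S_k = x) P(S'_l = x)
= \sum_{k,l=1}^n P(S_k - S'_l = 0).
\]
Because $S_k - S'_l$ is a sum of $k+l$ independent, centered, non-degenerate $\Z^2$-valued increments (the last $l$ having the symmetrized law of $S_1$), it satisfies the same LCLT hypotheses as $S_{k+l}$, and hence
\[
P(S_k - S'_l = 0) \le \frac{C}{k+l}.
\]
It then remains to check that $\sum_{k,l=1}^n (k+l)^{-1} \le K n$, which follows from grouping terms by $s = k+l \in \{2,\dots,2n\}$: the number of pairs with $k+l = s$ and $1 \le k,l \le n$ is at most $\min(s-1, 2n-s+1) \le s$, so the sum is bounded by $2n$.

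The only real obstacle is making sure the LCLT applies cleanly to the increments of $S_k - S'_l$ (not just to $S$), but aperiodicity and non-singularity of $\Sigma$ pass to the symmetrized walk, so this causes no trouble; under (A1') one invokes the corresponding one-dimensional LCLT for the stable walk instead. Everything else is bookkeeping.
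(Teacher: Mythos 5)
Your proposal is correct, but it takes a genuinely different route from the paper, which proves nothing itself: item \emph{(i)} is quoted from the proof of Lemma 2.5 in Bolthausen's paper, and item \emph{(ii)} from the proof of Corollary 3.2 in Le Gall--Rosen, after observing that $\sum_{x} E[N_n(x)]^2$ is the expected mutual intersection local time $E[J_n]$ of two independent copies of $S$ --- exactly the quantity your independent copy $S'$ makes appear. Your version is self-contained: everything is reduced to the single input $\sup_{x} P(S_k = x) \le C/k$, and the rest is the elementary diagonal count $\sum_{k,l \le n}(k+l)^{-1} \le 2n$, which is a nice trade (one standard estimate plus bookkeeping, versus two external references). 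Two small points of care. First, the phrase ``the LCLT applies to $S_k - S'_l$'' is the only delicate step, since $S_k - S'_l$ is not an i.i.d.\ sum; it is cleaner to condition on $S'_l$ to get $P(S_k = S'_l) \le \sup_x P(S_k = x) \le C/k$, and symmetrically $\le C/l$, hence $\le C/\max(k,l) \le 2C/(k+l)$, with no limit theorem needed for the difference walk. Second, the paper's aperiodicity is Spitzer's weak form (not confined to a proper subgroup), under which the uniform upper bound $\sup_x P(S_k=x) \le C/k$ is most safely obtained from a concentration-function estimate (or from the LCLT on the appropriate affine sublattice) rather than from the strongly aperiodic pointwise LCLT; the bound itself is true and standard, so this is a matter of citation hygiene rather than a gap.
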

\begin{proof}
Item \emph{(i)} can be found e.g.\ in the proof of Lemma 2.5 in \cite{Bo89}.
Item \emph{(ii)} follows from the proof of Corollary 3.2 in \cite{JFGR}; 
note that the l.h.s.\ of \eqref{eq:mutintloctimes} is the expectation of the mutual intersection
local time of two independent copies of $S$, denoted by $J_n$ in \cite{JFGR}.
\end{proof}

\begin{lem}\label{lem:exit_range}
Let
\begin{equation}\label{defRn}
R_n := \{x \in \Z^2 \colon \; N_n(x) > 0\}
\end{equation}
be the range of the random walk $S$ up to time $n$.
There exists a constant $C>0$ such that for all $n\geq 2$,
\begin{equation}
P(S_n \notin R_{n-1}) \leq C (\log n)^{-1}.
\end{equation}
\end{lem}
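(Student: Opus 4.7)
The plan is to use time reversal to identify $P(S_n \notin R_{n-1})$ with the tail of the first return time of $S$ to the origin, and then to invoke a classical estimate for recurrent planar random walks.

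First I would rewrite the event as $\{S_n \notin R_{n-1}\} = \{S_n - S_k \neq 0,\ \forall\, 1 \le k \le n-1\}$. Setting $S'_j := S_n - S_{n-j}$ for $j = 0, \ldots, n$, the i.i.d.\ nature of the increments of $S$ ensures that the process $(S'_j)_{j=0}^n$ has the same law as $(S_j)_{j=0}^n$. Since $S_n - S_k = S'_{n-k}$, the event becomes $\{S'_j \neq 0,\ \forall\, 1 \le j \le n-1\}$, and therefore
\begin{equation*}
P(S_n \notin R_{n-1}) \;=\; P(\tau_0 > n-1), \qquad \tau_0 := \inf\{j \ge 1: S_j = 0\}.
\end{equation*}

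It then suffices to show $P(\tau_0 > n) \le C/\log n$, which is a classical estimate for two-dimensional recurrent random walks; in fact Spitzer's book \cite{S76} provides the sharper asymptotic $P(\tau_0 > n) \sim \pi \sqrt{\det \Sigma}/\log n$. If one prefers to derive it in place, one can combine the renewal identity $\sum_{k \ge 0} P(S_k = 0) s^k = \bigl(1 - \sum_{k \ge 1} P(\tau_0 = k) s^k\bigr)^{-1}$ with the local central limit theorem $P(S_k = 0) \sim (2\pi k \sqrt{\det \Sigma})^{-1}$ (consistent with part (i) of Lemma~\ref{lemma:2dRWs} via the partial sum $\sum_{k \le n} P(S_k = 0) \sim \log n / (2\pi \sqrt{\det \Sigma})$), and then extract the tail of $\tau_0$ by Karamata's Tauberian theorem.

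The step of real substance is the return-time estimate itself; it is classical but not elementary, relying either on a direct citation or on the local CLT plus a Tauberian argument. Under the alternative assumption (A1'), one repeats the same derivation using the local CLT on $\Z$ for a walk in the domain of normal attraction of a Cauchy law: the estimate $P(S_k = 0) \sim c/k$ persists and yields the same $1/\log n$ decay for $P(\tau_0 > n)$, with the constant appropriate to the Cauchy case.
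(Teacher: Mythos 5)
Your proposal is correct and coincides with the argument the paper implicitly relies on: the paper's proof of this lemma is a one-line citation to Section 2 of \cite{DE50}, and your time-reversal identity $P(S_n \notin R_{n-1}) = P(\tau_0 > n-1)$ followed by the classical tail estimate for the return time of a recurrent planar walk (via the renewal identity, the local CLT and a Tauberian argument) is precisely the Dvoretzky--Erd\H{o}s argument found there. One cosmetic remark: for an aperiodic walk with covariance $\Sigma$ the sharp asymptotic is $P(\tau_0>n)\sim 2\pi\sqrt{\det\Sigma}/\log n$ rather than $\pi\sqrt{\det\Sigma}/\log n$, but this factor is irrelevant for the upper bound $C(\log n)^{-1}$ that the lemma asserts.
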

\begin{proof}
One can for instance find a proof in Section 2 of \cite{DE50}, which actually holds for more general random walks than the nearest-neighbour walk considered
there.
\end{proof}

\section{Proof of Theorem~\ref{theoCLT}}
\label{sec:prooftheoCLT}

The proof consists of two steps: 
first we define a truncation of the RWRS that approximates well the original process,
and then we prove \eqref{concentrationLipschitz} for the truncated version.

\subsection{Truncation}
\label{subsec:trunc}

For $n \ge 2$, set $M_n := \sqrt{n / (\log n)^\gamma}$, where
\begin{equation}
\gamma := 1 + \frac{\chi}{2}, 
\end{equation}

define $\xi_n$, $\widehat{\xi}_n$ $\in \R^{\Z^2}$ by
\begin{equation}\label{deftruncscene}
\begin{array}{rcl}
\xi_n(x) & := & \xi(x) \mathbf{1}_{\{|\xi(x)| \le M_n \}} \\
\widehat{\xi}_n(x) & := & \xi_n(x) - \mathbb{E} \left[ \xi_n(x) \right]
\end{array}
\;\;\; \text{ for }x \in \Z^2,
\end{equation}
and let $Z^{(n)}$ and $\widehat{Z}^{(n)}$ be defined by
\begin{equation}\label{deftruncpapa}
\begin{array}{rclcl}
Z^{(n)}_k & := & \sum_{i=1}^k \xi_n(S_i) & = & \sum_{x \in \Z^2} \xi_n(x) N_k(x) \\
\widehat{Z}^{(n)}_k & := & \sum_{i=1}^k \widehat{\xi}_n(S_i) & = & \sum_{x \in \Z^2} \widehat{\xi}_n(x) N_k(x)
\end{array}
\;\;\; \text{ for } k \in \N^*.
\end{equation}

The following two propositions show that, in order to prove Theorem~\ref{theoCLT},
it is enough to prove the same statement for $\widehat{W}^{(n)}_t:=  (n \log n)^{-\frac12} \widehat{Z}^{(n)}_{\lfloor n t\rfloor}$, $t \ge 0$.

\begin{pr}\emph{(Comparison between $Z$ and $Z^{(n)}$)}\label{prop:TvsNT}
\text{}

Fix $T>0$. There exists $\mathbb{P}$-a.s.\
a random time $T_0 \in \N^*$ such that, if $n \ge T_0$, 
then $Z_k^{(n)}= Z_k$ for all $1 \le k \le \lfloor n T \rfloor$.
\end{pr}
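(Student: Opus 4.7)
The event $\{\exists\, 1 \le k \le \lfloor nT\rfloor : Z_k^{(n)} \ne Z_k\}$ coincides with
$A_n := \{\exists\, x \in R_{\lfloor nT\rfloor} : |\xi(x)| > M_n\}$, so my plan is to show that $\mathbb{P}(A_n \text{ i.o.}) = 0$. Since $m\mapsto R_m$ and $n\mapsto M_n$ are both non-decreasing, I would pass to the geometric subsequence $n_k := 2^k$ and exploit the monotone inclusion
\[
\bigcup_{n_k \le n < n_{k+1}} A_n \;\subseteq\; E_k := \bigl\{\exists\, x \in R_{\lfloor 2 n_k T\rfloor} : |\xi(x)| > M_{n_k}\bigr\},
\]
so that it suffices to prove $\sum_k \mathbb{P}(E_k)<\infty$ and invoke Borel--Cantelli.

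To control $\mathbb{P}(E_k)$, I would apply a union bound and use the independence of $S$ and $\xi$, together with the consequence of Lemma~\ref{lem:exit_range} that $E[|R_n|] = 1+\sum_{\ell=2}^n P(S_\ell\notin R_{\ell-1}) \le C\, n/\log n$, to obtain
\[
\mathbb{P}(E_k) \;\le\; \mathbb{E}\bigl[|R_{\lfloor 2 n_k T\rfloor}|\bigr]\, \mathbb{P}(|\xi(0)|>M_{n_k}) \;\le\; C\, \frac{n_k}{k}\, \mathbb{P}(|\xi(0)|>M_{n_k}).
\]
The summability would then follow from a Fubini step: using $M_{n_k}^2 = n_k/(\log n_k)^\gamma \sim 2^k/k^\gamma$, for each realization of $\xi(0)$ the set $\{k : M_{n_k}<|\xi(0)|\}$ is contained in $[1,K]$ with $K \sim 2\log_2|\xi(0)|$, and the partial geometric sum $\sum_{k'\le K}2^{k'}/k' \sim 2^K/K$ is of order $|\xi(0)|^2 (\log^+|\xi(0)|)^{\gamma-1}$. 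With the calibration $\gamma-1 = \chi/2$ this gives
\[
\sum_k \mathbb{P}(E_k) \;\le\; C\,\mathbb{E}\!\left[\sum_{k:\, M_{n_k}<|\xi(0)|} \frac{n_k}{k}\right] \;\le\; C'\,\mathbb{E}\bigl[|\xi(0)|^2(\log^+|\xi(0)|)^{\chi/2}\bigr] \;<\;\infty
\]
by (A2), after the trivial bound $(\log^+y)^{\chi/2}\le 1 + (\log^+y)^\chi$.

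The main obstacle will be the logarithmic bookkeeping: one loses a $\log n$ factor from the range asymptotic and gains $(\log n)^\gamma$ from the truncation threshold $M_n$, and the power of $\log$ surviving the Fubini swap must stay strictly smaller than $\chi$. The choice $\gamma = 1 + \chi/2$ is calibrated precisely to make the surviving exponent equal to $\chi/2$, which is where the hypothesis $\chi>0$ in (A2) enters; no further moment smoothing is needed, and the rest of the argument is a bare application of Borel--Cantelli.
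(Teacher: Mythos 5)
Your argument is correct, and it takes a genuinely different route from the paper's. The paper does not attempt to sum $\mathbb{P}(A_n)$ over all $n$ (which would fail, since these probabilities decay only logarithmically); instead it tracks the \emph{increments} $\mathcal{D}_n\setminus\mathcal{D}_{n-1}$ of the bad set, i.e.\ the event that the walk acquires a \emph{new} range point carrying a scenery value above $M_n$ during the time window $(\lfloor (n-1)T\rfloor,\lfloor nT\rfloor]$. Lemma~\ref{lem:exit_range} then contributes a factor $C/\log n$ per window, and the resulting series is shown to be summable via a blocking argument over blocks $e^{L^{\alpha}}\le n<e^{(L+1)^{\alpha}}$; a second step is still needed afterwards (the definition of $T_0$ as the first time $M_n$ exceeds $\sup_{x\in\mathcal{D}_{N_0}}|\xi(x)|$) to upgrade ``only finitely many new bad points'' to ``eventually no bad points.'' You instead circumvent the non-summability of $\mathbb{P}(A_n)$ by dyadic blocking in $n$ together with the monotonicity of $n\mapsto R_{\lfloor nT\rfloor}$ and $n\mapsto M_n$ (the latter holds for $n>e^{\gamma}$, which suffices), use Lemma~\ref{lem:exit_range} in the equivalent form $E[|R_n|]\le Cn/\log n$, and replace the paper's $L$-blocking by a clean layer-cake computation reducing everything to $\mathbb{E}[|\xi(0)|^2(\log^+|\xi(0)|)^{\gamma-1}]<\infty$, which with $\gamma-1=\chi/2$ follows from (A2). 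This needs only one application of Borel--Cantelli and no second ``absorption'' step; it even shows there is slack, since only the $(\log)^{\chi/2}$-moment is used. Two cosmetic points: the event $\{\exists k\le\lfloor nT\rfloor: Z_k^{(n)}\ne Z_k\}$ is \emph{contained in} $A_n$ rather than equal to it (cancellations could in principle occur), but only the inclusion $A_n^c\subseteq\{Z_k^{(n)}=Z_k\ \forall k\}$ is needed and that one is immediate; and the monotonicity of $M_n$ fails for finitely many small $n$, which is harmless for an i.o.\ statement but should be mentioned.
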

\begin{proof}
Let $R_k$ be the range of the random walk as in \eqref{defRn},
an set
\begin{equation}
\mathcal{D}_n := \{ x \in R_{\lfloor nT \rfloor} \colon\, \xi_n(x) \neq \xi(x)\}.
\end{equation}
We have
\begin{equation}
\mathcal{D}_n \setminus \mathcal{D}_{n-1} = 
\left\{x \in R_{\lfloor nT \rfloor} \setminus R_{\lfloor (n-1)T \rfloor} \colon\, |\xi(x)| > M_n \right\}.
\end{equation}
Therefore, if $d_n := \mathbb{P} \left( \mathcal{D}_n \setminus \mathcal{D}_{n-1} \neq \emptyset \right)$,
\begin{align}
d_n & = \mathbb{P} \left( \exists \; \lfloor (n-1) T \rfloor < k \le \lfloor n T \rfloor \colon\, |\xi(S_k)| > M_n,\, S_k \notin R_{\lfloor (n-1)T \rfloor}
\right) \nonumber\\
& \leq \mathbb{P}\Big(
\begin{array}{l} 
\exists \; \lfloor (n-1) T \rfloor < \ell \le \lfloor n T \rfloor \colon\, |\xi(S_\ell)| > M_n  \\
\mbox{and } \exists \; \lfloor (n-1) T \rfloor < k \le \lfloor n T \rfloor \colon\, S_k \notin R_{\lfloor (n-1)T \rfloor}
\end{array} \Big) \nonumber\\
& \leq \sum_{\ell = \lfloor (n-1) T \rfloor +1}^{\lfloor n T \rfloor}
\mathbb{P}\Big( \begin{array}{l} 
|\xi(S_\ell)| > M_n \mbox{ and }  \\
 \exists \; \lfloor (n-1) T \rfloor < k \le \lfloor n T \rfloor \colon\, S_k \notin R_{\lfloor (n-1)T \rfloor} \;
\end{array} \Big) \nonumber\\
& \le (T+1)\, \mathbb{P} \left( |\xi(0)| > M_n \right) \, \nonumber\\
& \qquad \qquad \;\; \times P\left( \exists \; \lfloor (n-1) T \rfloor < k \le \lfloor n T \rfloor \colon\, S_k \notin R_{\lfloor (n-1)T \rfloor} \right),
\end{align}
where the last inequality is justified by summing over the possible values of $S_{\ell}$.

Let us now prove that $(d_n)_{n\geq 1}$ is summable. 
Considering the first 
$k > \lfloor (n-1) T \rfloor$ such that $S_k \notin R_{\lfloor (n-1)T \rfloor}$,
we see that
\begin{align}\label{eq_ineq1_prop:TvsNT}
 & \;\; P\left( \exists \; \lfloor (n-1) T \rfloor < k \le \lfloor n T \rfloor \colon\, S_k \notin
R_{\lfloor (n-1)T \rfloor} \right) \nonumber\\
= & \;\; P\left( \exists \; \lfloor (n-1) T \rfloor < k \le \lfloor n T \rfloor \colon\, S_k \notin
R_{k-1} \right) \nonumber\\
\leq & \; \sum_{k = \lfloor (n-1) T \rfloor +1}^{\lfloor n T \rfloor} P(S_k \notin R_{k-1}) \leq \frac{C}{\log n},
\end{align}
where we used Lemma \ref{lem:exit_range} for the last inequality. On the other hand, since $f(x) := x^2 (\log^+ x )^{\chi}$ is non-decreasing on
$(0,\infty)$ and $f(M_n) \ge C n (\log n)^{\chi - \gamma}$ for some $C>0$ and all $n \ge 2$,
\begin{equation}\label{eq_ineq2_prop:TvsNT}
\mathbb{P}\left( |\xi(0)| > M_n \right) \le \mathbb{P}\left( |\xi(0)|^2 (\log^+|\xi(0)|)^\chi \ge C n (\log n)^{\chi - \gamma} \right).
\end{equation}
The combination of \eqref{eq_ineq1_prop:TvsNT} and \eqref{eq_ineq2_prop:TvsNT} yields
\begin{equation}
 \sum_{n\geq 2} d_n \leq C \sum_{n\geq 2} (\log n)^{-1} \mathbb{P}\left( |\xi(0)|^2 (\log^+|\xi(0)|)^\chi \ge C n (\log n)^{\chi - \gamma} \right).
\end{equation}
If $\chi - \gamma = \chi/2 - 1 \geq 0$, it follows from (A2) and the line above (where $(\log n)^{-1}$ is roughly bounded by a constant) that $(d_n)_{n\geq 1}$
is summable. We now restrict to the case $0<\chi<2$, that is $\chi - \gamma <0$, which requires a bit more work. For all $\alpha>0$,
\begin{align}
 \sum_{n \geq 3} d_n & \leq C \sum_{L\geq 1} \sum_{e^{L^{\alpha}}\leq n < e^{(L+1)^{\alpha}}} (\log n)^{-1} \mathbb{P}\left( |\xi(0)|^2
(\log^+|\xi(0)|)^\chi \ge C n (\log n)^{\chi - \gamma} \right)\nonumber\\
&\leq C \sum_{L\geq 1} L^{-\alpha} \sum_{e^{L^{\alpha}}\leq n < e^{(L+1)^{\alpha}}} \mathbb{P}\left( |\xi(0)|^2
(\log^+|\xi(0)|)^\chi \ge C L^{\alpha (\chi - \gamma)} n \right)\nonumber\\
&\leq C \sum_{L\geq 1} L^{-\alpha} \sum_{n\geq 1} \mathbb{P}\left( |\xi(0)|^2
(\log^+|\xi(0)|)^\chi \ge C L^{\alpha (\chi - \gamma)} n \right)\nonumber\\
& \leq C \sum_{L\geq 1} L^{-\alpha} \frac{1}{CL^{\alpha(\chi - \gamma)}} \mathbb{E} \left[|\xi(0)|^2 (\log^+|\xi(0)|)^\chi \right], 
\end{align}
which is finite as soon as $\alpha(1+\chi-\gamma)> 1$. Since $1+\chi - \gamma = \chi/2 >0$, the latter condition can be achieved by choosing $\alpha$ large
enough. We have now proven that $(d_n)_{n\geq 1}$ is summable, so by the Borel-Cantelli lemma there exists a random index $N_0 \in \N^*$
such that a.s.\ $\mathcal{D}_n \subset \mathcal{D}_{N_0}$ for all $n \ge N_0$.
Therefore, setting
\begin{equation}
T_0 := \inf \left\{n \ge N_0 \colon\, M_n > \sup_{x \in \mathcal{D}_{N_0}} |\xi(x)| \right\}, 
\end{equation}
we have $\mathcal{D}_n = \emptyset$ for $n \ge T_0$.
\end{proof}

\begin{pr}\label{prop:TvsTR}\emph{(Comparison between $Z^{(n)}$ and $\widehat{Z}^{(n)}$)}
\begin{equation}
\lim_{n \to \infty} \sup_{1\le k \le \lfloor nT \rfloor} \frac{|\widehat{Z}^{(n)}_k - Z^{(n)}_k|}{\sqrt{n \log n}} = 0 \;\;\; \mathbb{P} \text{-a.s.\ for any } T>0.
\end{equation}
\end{pr}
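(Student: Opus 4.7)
The key observation is that $\widehat{Z}^{(n)}_k - Z^{(n)}_k = -\sum_{i=1}^k \mathbb{E}[\xi_n(S_i)]$, and since the $\xi(x)$ are identically distributed, $\mathbb{E}[\xi_n(x)]$ does not depend on $x$. Hence the difference is the deterministic quantity $-k \mathbb{E}[\xi_n(0)]$, so
\[
\sup_{1 \le k \le \lfloor nT \rfloor} |\widehat{Z}^{(n)}_k - Z^{(n)}_k| = \lfloor nT \rfloor \, |\mathbb{E}[\xi_n(0)]|,
\]
and the almost-sure aspect of the statement becomes automatic once deterministic convergence is proved.

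The plan is then to show that $\sqrt{n/\log n}\, |\mathbb{E}[\xi_n(0)]| \to 0$. Since $\mathbb{E}[\xi(0)] = 0$ by (A2), we rewrite
\[
\mathbb{E}[\xi_n(0)] = -\mathbb{E}\bigl[\xi(0)\, \mathbf{1}_{\{|\xi(0)| > M_n\}}\bigr],
\]
and estimate its absolute value by using the moment condition. For $n$ large enough that $M_n \ge e$, the basic inequality
\[
|\mathbb{E}[\xi_n(0)]| \;\le\; \mathbb{E}\bigl[|\xi(0)|\, \mathbf{1}_{\{|\xi(0)| > M_n\}}\bigr] \;\le\; \frac{1}{M_n (\log M_n)^{\chi}} \,\mathbb{E}\bigl[|\xi(0)|^2 (\log^+|\xi(0)|)^{\chi}\bigr]
\]
holds, since on $\{|\xi(0)| > M_n\}$ we have $|\xi(0)| \ge M_n$ and $\log^+ |\xi(0)| \ge \log M_n$. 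By (A2), the expectation on the right is finite.

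The final step is a direct substitution of $M_n = \sqrt{n/(\log n)^{\gamma}}$ with $\gamma = 1 + \chi/2$. Since $\log M_n = \tfrac{1}{2}\log n - \tfrac{\gamma}{2}\log\log n \sim \tfrac{1}{2}\log n$, we get
\[
\sqrt{\frac{n}{\log n}}\, |\mathbb{E}[\xi_n(0)]| \;\le\; \frac{C\sqrt{n/\log n}}{\sqrt{n/(\log n)^{\gamma}}\,(\log n)^{\chi}} \;=\; \frac{C'}{(\log n)^{\,\chi - (\gamma-1)/2 - 1/2 + \gamma/2}} \;=\; \frac{C'}{(\log n)^{3\chi/4}},
\]
which tends to $0$ because $\chi > 0$. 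Multiplying by the factor $T$ coming from $\lfloor nT \rfloor / n$ completes the proof.

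The whole argument is essentially a routine estimate; there is no conceptual obstacle. The only point requiring a bit of care is making sure the chosen exponent $\gamma = 1 + \chi/2$ produces a rate fast enough to beat the $\sqrt{n/\log n}$ blow-up, which is precisely why $\gamma$ was chosen as it was. The role of the logarithmic factor $(\log^+|\xi(0)|)^{\chi}$ in assumption (A2) is exactly to give the extra $(\log M_n)^{\chi}$ in the denominator, which compensates for the fact that $M_n$ is slightly smaller than $\sqrt{n}$.
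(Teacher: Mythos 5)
Your proof is correct and follows essentially the same route as the paper: the difference $\widehat{Z}^{(n)}_k - Z^{(n)}_k$ is the deterministic quantity $-k\,\mathbb{E}[\xi_n(0)]$, which by centering equals $k\,\mathbb{E}[\xi(0)\ind{\{|\xi(0)|>M_n\}}]$ and is bounded via the weighted Markov inequality using $(\log^+|\xi(0)|)^\chi$, yielding the same rate $(\log n)^{-3\chi/4}$. The only blemish is a bookkeeping slip in your intermediate exponent $\chi-(\gamma-1)/2-1/2+\gamma/2$ (which evaluates to $\chi$ rather than $3\chi/4$), but the final stated rate is the correct one.
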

\begin{proof}
Since $\xi$ is centered,
\begin{align}\label{eq:est_mom_xin}
\left| \mathbb{E} \left[ \xi(0) \mathbf{1}_{\{|\xi(0)| \le M_n\}}\right] \right| = \left| \mathbb{E} \left[ \xi(0) \mathbf{1}_{\{|\xi(0)| > M_n\}}\right]
\right|
& \leq \frac{\mathbb{E}\left[ |\xi(0)|^2 (\log^+|\xi(0)|)^{ \chi} \right]}{M_n (\log M_n)^{ \chi}} \nonumber\\
& \le \frac{C}{\sqrt{n} (\log n)^{ \chi - \gamma/2}}.
\end{align}
Therefore, for $1 \le k \le \lfloor n T \rfloor$,
\begin{equation}
\frac{|Z^{(n)}_k - \widehat{Z}^{(n)}_k|}{\sqrt{n \log n}} = \frac{k \left| \mathbb{E}\left[ \xi(0) \mathbf{1}_{\{|\xi(0)| \le M_n\}} \right] \right|}{\sqrt{n
\log n}}
\le \frac{C \; T}{(\log n)^{ \chi + (1-\gamma)/2}}.
\end{equation}
This ends the proof, since $\chi + (1-\gamma)/2 = 3\chi/4 >0$.
\end{proof}

\subsection{Control of the truncated version}
\label{subsec:control_truncation}
From now on we will work with the truncated and recentered version $\widehat{Z}^{(n)}$ of the RWRS.
Let $\widehat{\mathcal{W}}^{(n)}$ be the analogue of $\mathcal{W}^{(n)}$ in \eqref{def:interpol} for $\widehat{Z}^{(n)}$, i.e.,
\begin{equation}\label{def:interpol2}
\widehat{\mathcal{W}}_t^{(n)} := \frac{\widehat{Z}^{(n)}_{\lfloor n t \rfloor} + \left(nt - \lfloor nt\rfloor \right) \left(\widehat{Z}^{(n)}_{\lfloor n t \rfloor + 1}-\widehat{Z}^{(n)}_{\lfloor n t \rfloor} \right)}{\sqrt{n \log n}}, \;\; t \ge 0.
\end{equation}

Fix $T >0$, $b \in (1,2]$ and $F:C([0,T], \R) \to \mathbb{C}$ bounded and Lipschitz.
By Propositions~\ref{prop:TvsNT}--\ref{prop:TvsTR}, weak convergence of either $W^{(n)}$ or $\widehat{W}^{(n)}$ implies the same convergence for the other, under both the quenched and annealed laws; 
therefore our work will be done once we show that
\begin{equation}\label{finalclaim}
\lim_{n \to \infty} \mathbb{E}\left[ F\left(\widehat{\mathcal{W}}^{(\lfloor b^n \rfloor)}\right) \middle| \xi \right]-\mathbb{E}\left[ F\left(\widehat{\mathcal{W}}^{(\lfloor b^n \rfloor)}\right) \right] = 0 \;\;\; \mathbb{P} \text{-a.s.}
\end{equation}

\begin{proof}[Proof of \eqref{finalclaim}]
Fix an arbitrary enumeration of $\Z^2 := \{x_1, x_2, \ldots\}$, define
\begin{equation}
\mathcal{G}_k := \sigma \left( \xi(x_i) \colon\, i \le k\right), \;\; k \in \N^*,
\end{equation}
and let
\begin{equation}
\Delta_k^{(n)}:= \mathbb{E}\left[ F\left(\widehat{\mathcal{W}}^{(n)}\right) \middle| \mathcal{G}_k \right] - \mathbb{E}\left[
F\left(\widehat{\mathcal{W}}^{(n)}\right) \middle| \mathcal{G}_{k-1} \right],
\end{equation}
where $\mathcal{G}_0$ is the trivial $\sigma$-algebra. The latter are increments of a bounded martingale. By the martingale convergence theorem,
\begin{equation}
\mathbb{E}\left[ F\left(\widehat{\mathcal{W}}^{(n)}\right) \middle|\; \xi \right]-\mathbb{E}\left[ F\left(\widehat{\mathcal{W}}^{(n)}\right) \right] = \sum_{k=1}^{\infty} \Delta_k^{(n)}.
\end{equation}

To control the $\Delta_k^{(n)}$, we introduce a coupling.
Let $\xi'$ be an independent copy of $\xi$, set
\begin{equation}
\widehat{\xi}_n^{(k)}(x):= \left\{ 
\begin{array}{ll}
\widehat{\xi'_n}(x) & \text{ if } x = x_k, \\
\widehat{\xi}_n(x) & \text{ otherwise,}
\end{array}\right.
\end{equation}
and let $\widehat{Z}^{(n,k)}$, $\widehat{\mathcal{W}}^{(n,k)}$ be the analogues of $\widehat{Z}^{(n)}$, $\widehat{\mathcal{W}}^{(n)}$,
but defined from $\widehat{\xi}_n^{(k)}$ and the same random walk $S$. Let $\mathbb{P}'$ denote
the joint law of $\xi'$, $\xi$ and $S$. Then
\begin{equation}\label{expr_Delta}
\Delta_k^{(n)} = \mathbb{E}' \left[ F\left(\widehat{\mathcal{W}}^{(n)} \right) - F \left(\widehat{\mathcal{W}}^{(n,k)} \right) \middle| \mathcal{G}_k\right] \;\;\; \mathbb{P}\text{-a.s.}
\end{equation}
Recalling \eqref{def:interpol2} and \eqref{defZ}, we see that
\begin{align}
\sup_{t \in [0,T]}|\widehat{\mathcal{W}}_t^{(n)} -\widehat{\mathcal{W}}_t^{(n,k)}|
& \le \sqrt{n \log n}^{-1} \sup_{1 \le m \le \lfloor nT \rfloor + 1} |\widehat{\xi}_n(x_k) - \widehat{\xi'_n}(x_k)|N_{m}(x_k)\nonumber\\
& = \sqrt{n \log n}^{-1} |\widehat{\xi}_n(x_k) - \widehat{\xi'_n}(x_k)|N_{\lfloor n T \rfloor+1}(x_k).
\end{align}
Therefore, by \eqref{expr_Delta}, the Lipschitz property of $F$ and Lemma~\ref{lemma:2dRWs}(i), we have
\begin{align}\label{bound_Delta}
|\Delta_k^{(n)}| & \le C \frac{M_n E \left[ N_{\lfloor nT \rfloor + 1}(x_k)\right]}{\sqrt{n \log n}} \nonumber\\
&\le C \frac{\log (nT +1)}{(\log n)^{\frac{\gamma+1}{2}}} \le \frac{C}{(\log n)^{ \chi/4}} \;\;\; \mathbb{P}\text{-a.s.},
\end{align}
and also
\begin{align}\label{bound_Delta2}
\mathbb{E}\left[|\Delta_k^{(n)}|^2 \middle | \mathcal{G}_{k-1}\right]
& \le C \frac{\mathbb{E}' \left[ \mathbb{E}' \left[|\widehat{\xi}_n(x_k) - \widehat{\xi'_n}(x_k)| \middle| \mathcal{G}_{k}\right]^2 \middle| \mathcal{G}_{k-1} \right] E \left[N_{\lfloor n T \rfloor+1}(x_k) \right]^2}{n \log n} \nonumber\\
& \le C \frac{\mathbb{E}' \left[|\widehat{\xi}_n(x_k) - \widehat{\xi'_n}(x_k)|^2 \right] E \left[N_{\lfloor n T \rfloor+1}(x_k) \right]^2}{n \log n} \nonumber\\
& \le C \frac{E \left[N_{\lfloor n T \rfloor+1}(x_k) \right]^2}{n \log n} \;\;\; \mathbb{P}\text{-a.s.},
\end{align}
where for the second line we used the Cauchy-Schwarz inequality. By \eqref{bound_Delta2} and Lemma~\ref{lemma:2dRWs}(ii), we have
\begin{equation}\label{bound_quadvar}
\sum_{k=1}^{\infty}\mathbb{E}\left[|\Delta_k^{(n)}|^2 \middle| \mathcal{G}_{k-1}\right]
\le C \frac{\lfloor nT \rfloor + 1}{ n \log n} \le \frac{C}{\log n} \;\;\; \mathbb{P}\text{-a.s.}
\end{equation}
Therefore, by Bernstein's inequality for martingales (see e.g.\ Theorem 1.2A in \cite{Pe99}),
for any $\epsilon >0$,
\begin{align}
\mathbb{P} \left( \left| \sum_{k=1}^{\infty} \Delta_k^{(n)} \right| > \epsilon \right)
& \le \exp \left\{ - C\frac{\epsilon^2}{(\log n)^{-1} + \epsilon(\log n)^{ -\chi/4} } \right\} \nonumber\\
& \le \exp\left\{ -C (\log n)^{ 1 \wedge \chi/4}\right\},
\end{align}
which is summable along $b^n$ for any $b > 1$; thus, by the Borel-Cantelli lemma, \eqref{finalclaim} holds.
\end{proof}


\noindent
{\bf Acknowledgments:}\\*
The authors are grateful to Mohamed El Machkouri and to Christophe Sabot for helpful and stimulating discussions.

\end{document}